\newcommand{\remove}[1]{}
\newcommand{\ignore}[1]{} 
\newtheorem{thm}{Theorem}[section]
\newtheorem{claim}[thm]{Claim}
\newtheorem{lem}[thm]{Lemma}
\newtheorem{define}[thm]{Definition}
\newtheorem{cor}[thm]{Corollary}
\def\F{{\mathbb{F}}}
\def\R{{\mathbb{R}}}
\def\C{{\mathbb{C}}}
\def\_{\,\,\,\,\,}
\newcommand{\nospace}[1]{}
\def\supp{\textsf{supp}}
\def\rank{\textsf{rank}}
\def\poly{\textsf{poly}}
\def\adim{\textsf{adim}}
\newcommand{\eps}{\epsilon}
\begin{document}

\title{A quantitative variant of the multi-colored Motzkin-Rabin theorem}

\author{ Zeev Dvir \thanks{Department of Computer Science and Department of Mathematics, Princeton University, Princeton NJ.
Email: \texttt{zeev.dvir@gmail.com}. Research partially
supported by NSF grants CCF-0832797, CCF-1217416 and by the Sloan fellowship.} \and Christian Tessier-Lavigne \thanks{Department of Mathematics, Princeton University, Princeton NJ. Email: \texttt{ctessierlavigne@gmail.com}}}
\date{}
\maketitle

\begin{abstract} 
We prove a quantitative version of the multi-colored Motzkin-Rabin theorem in the spirit of \cite{BDWY12}: Let $V_1,\ldots,V_n \subset \R^d$ be  $n$ disjoint sets of points (of $n$ `colors'). Suppose that for every $V_i$ and every point $v \in V_i$ there are at least $\delta |V_i|$ other points $u \in V_i$ so that the line connecting $v$ and $u$ contains a third point of another color. Then the union of the points in all $n$ sets is contained in a subspace of dimension bounded by a function of $n$ and $\delta$ alone.
\end{abstract}

\section{Introduction}

The Motzkin-Rabin (MR) theorem (see \cite{BM90}) states that in a non-collinear set of points in the Euclidean plane, each colored blue or red, there always exists a monochromatic line (a line passing through at least two points and all points on the line are of the same color). Another way to state this theorem uses the following definition which we shall later generalize.

\begin{define}[MR configuration]\label{def-MRsimple}
Let $V_1,V_2 \subset \R^2$ be disjoint, finite sets of points in the plane. The pair $V_1,V_2$ is called an {\em MR-configuration} if every line $L$ with $|L \cap (V_1 \cup V_2)| \geq 2$ must intersect both sets $V_1$ and $V_2$.
\end{define}

The Motzkin-Rabin theorem can now be stated equivalently as:

\begin{thm}[Motzkin-Rabin Theorem]\label{thm-MRsimple}
Let $V_1,V_2 \subset \R^2$ be an MR-configuration. Then all points in $V_1 \cup V_2$ must belong to a single line.
\end{thm}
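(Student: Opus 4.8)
I would argue by contradiction: suppose $P := V_1\cup V_2$ is not contained in a line, yet every connecting line (every line $L$ with $|L\cap P|\ge 2$) meets both classes; call the points of $V_1$ red and those of $V_2$ blue. A preliminary observation is that some three points of $P$ are collinear, i.e.\ $P$ has a connecting line with at least three points: if not, every two points would span a connecting line with exactly two points, which the MR-property forces to be bichromatic, so any two points would have different colors, forcing $|V_1|,|V_2|\le 1$ and hence $|P|\le 2$, contradicting non-collinearity. (One could instead run everything on a vertex-minimal counterexample, which makes this and a few similar reductions automatic, but the extremal setup below is cleaner to state.)

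The core is a Sylvester--Gallai-type extremal argument. Consider the nonempty family of pairs $(p,L)$ with $L$ a connecting line satisfying $|L\cap P|\ge 3$ and $p\in P\setminus L$, and pick $(v,L)$ minimizing $d(v,L)$; let $g$ be the foot of the perpendicular from $v$ to $L$. Among the $\ge 3$ points of $L\cap P$, one of the two closed rays of $L$ issuing from $g$ contains two of them; let $b$ be the one farther from $g$ and $a\neq b$ another, so $a$ lies on the segment $\overline{gb}$. Since $\overline{vb}$ is the hypotenuse of the right triangle $v g b$, the distance from $g$ to $\overline{vb}$ equals $|vg|\,|gb|/|vb| < |vg| = d(v,L)$, and since $a$ lies between $g$ and $b$ we get $d(a,\overline{vb})\le d(g,\overline{vb})<d(v,L)$. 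Thus if $\overline{vb}$ carried a third point of $P$ the pair $(a,\overline{vb})$ would beat $(v,L)$; so $\overline{vb}$ is ordinary, and the MR-property makes it bichromatic, i.e.\ $v$ and $b$ have opposite colors. Feeding in every admissible $b$, one finds that all of $L\cap P$ except possibly its point(s) closest to $g$ have the color opposite to $v$; since $L$ is bichromatic it does contain a point $a^*$ of $v$'s color, necessarily in that exceptional position, and then the line joining $v$ to $a^*$ is monochromatic, so the MR-property forces a hidden point of the other color onto it --- a new $\ge 3$-point connecting line through $v$ on which to iterate.

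The real obstacle is converting these forced incidences into an outright contradiction, together with disposing of the configurations outside the above family --- notably when every distance-minimizing pair $(v,L)$ has $|L\cap P|=2$ with $g$ interior to the segment spanned by $L\cap P$. The subtlety is that minimality of $d(v,L)$ supplies only \emph{lower} bounds $d(\cdot,\cdot)\ge d(v,L)$ on other point--line distances, so naive iteration need not terminate; one must either check that the new $\ge 3$-point line again hosts a distance-minimizer, or combine several forced collinearities (on $\overline{va}$, $\overline{vb}$, $L=\overline{ab}$ and the monochromatic lines these spawn) into a bounded chain of points whose two-coloring is self-contradictory. I would finish the residual two-point-line cases by a short but fussy subcase analysis, in each subcase invoking the single essential feature of the hypothesis --- a monochromatic pair of points forces a third collinear point of the other color --- to exhibit a point strictly nearer some connecting line than $d(v,L)$, the desired contradiction.
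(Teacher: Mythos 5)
The paper itself does not prove Theorem~\ref{thm-MRsimple}; it quotes it as a known result and points to \cite{Chakerian} (and the account in \cite{Grunbaum}) for the proof, so there is no internal argument to compare against. Judged on its own terms, your proposal has a genuine gap: it is a setup, not a proof. The parts you do carry out are fine --- restricting Kelly's minimal-distance argument to the family of pairs $(p,L')$ with $|L'\cap P|\ge 3$, showing this family is nonempty, and deducing that for the minimizing pair $(v,L)$ every point of $L\cap P$ other than the point(s) nearest the foot $g$ must have the color opposite to $v$, whence some exceptional point $a^*$ shares $v$'s color and the line $va^*$ is forced by the MR-property to be a new $\ge 3$-point connecting line. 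But at exactly the point where a contradiction must be produced, the argument stops: you concede that minimality gives only lower bounds on other distances, that the iteration ``need not terminate,'' and you defer the conclusion to an unspecified ``short but fussy subcase analysis'' whose cases and arguments are never exhibited. Nothing in the sketch shows that the new line $va^*$ admits a point of $P$ strictly closer to it than $d(v,L)$, or that the forced collinearities assemble into a two-coloring contradiction; without that, no contradiction is ever reached. (The sentence about minimizing pairs with $|L\cap P|=2$ is also internally inconsistent with your own definition of the family, which already excludes two-point lines.)

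This missing endgame is not a routine detail: it is precisely the hard content of the Motzkin--Rabin theorem. Kelly's perpendicular-distance argument proves Sylvester--Gallai in a few lines, but it is well known that it does not extend directly to the two-colored statement --- the step where an ordinary line yields a contradiction is replaced here by the much weaker conclusion ``the line is bichromatic,'' and turning the resulting color constraints into a contradiction is where all known proofs do real work (Chakerian's proof via Euler's formula for arrangements in the projective plane, or the later elementary but substantially longer arguments). So the plan as written is plausible as a first move but cannot be accepted as a proof; you would need to actually supply the terminating argument or switch to one of the established routes.
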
 

It is easy to see that one can replace $\R^2$ with $\R^d$ and that the theorem will still hold in this case (take a generic projection to the plane). This theorem answers a question first raised by Graham \cite{Grunbaum}. The first published proof of Theorem~\ref{thm-MRsimple} appears in \cite{Chakerian} though it was proved earlier (but never published) by Motzkin and Rabin  \cite{Grunbaum}. 

We will denote by $\adim(S)$ the dimension of the affine span (the smallest affine subspace containing the points) of a point set $S \subset \R^d$ and for a family of sets $S_1,\ldots,S_r$ we will write $\adim(S_1,\ldots,S_r) = \adim(S_1 \cup \ldots \cup S_r)$. Then, the conclusion of the MR theorem, namely all points in $V_1,V_2$  being on a line, can be stated as $\adim(V_1,V_2) \leq 1$. Hence, we can view the MR theorem as converting partial information about collinearity in the sets $V_1,V_2$ (the line through every pair of points of the same color contains a third point of a different color) into a global bound on the dimension of the entire configuration. A closely related theorem is the Sylvester-Gallai theorem which is a `one color' version of the MR theorem: in every non-collinear set of points there is a line containing only two of the points.

Shannon \cite{Sha74} (see also \cite{Bor82}) proved an $n$-color variant of this theorem showing that if a family of $n$ sets $V_1,\ldots,V_n$ spans $\R^n$ then they must define at least one monochromatic line. In this work we extend this result to the setting where the information about collinearities is only given for {\em many} of the lines passing through two points of the same color. To be precise we will give the following definition:

\begin{define}[$(\delta,n)$-MR configuration]\label{def-deltanMR}
	Let $V_1, V_2, ..., V_n$ be disjoint sets of points in $\R^d$, and let $V = V_1 \cup V_2 \cup ... \cup V_n$. We say that $V_1, V_2, ..., V_n$ is a \emph{$(\delta, n)$-MR  configuration} if for each $V_i$ and for each $v \in V_i$, there are at least $\delta |V_i|$  points $u \in V_i\setminus \{v\}$ for which the line determined by $v$ and $u$  contains a third point in $V \setminus V_i$.	For convenience we will always assume that  $|V_1| \geq |V_2| \geq ... \geq |V_n|$. 
\end{define}

Our main theorem gives a dimension bound for $(\delta,n)$-MR configuration that depends only on $n$ and $\delta$. We do not believe our bound to be tight and conjecture that a bound of $\poly(n/\delta)$ holds in general.

\begin{thm}[Main theorem]\label{thm-main}
Let $V = V_1, V_2, ..., V_n \subset \R^d$ be a $(\delta,n)$-MR configuration. Then, for any $0 < \eps < \delta$ we have  $$\adim(V) \leq \frac{C}{\eps^2} \cdot \left(1 + \frac{1}{\delta-\eps}\right)^n,$$ with $C>0$ an absolute constant\footnote{One could set $\eps = \delta/2$ to get a simpler (but worse, in some cases) bound.}.
\end{thm}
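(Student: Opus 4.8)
The natural approach is to reduce the multi-colored, quantitative statement to the known results by a combination of the "design-matrix / rank bound" machinery of \cite{BDWY12} and an inductive argument on the number of colors $n$. The key object is a rank bound for matrices whose support pattern encodes the collinearity data; each collinear triple $\{v,u,w\}$ with $v,u \in V_i$ and $w \in V\setminus V_i$ gives a linear dependence among the three points (viewed as vectors in $\R^{d+1}$ after homogenizing), and these dependencies can be arranged into a matrix $A$ with rows indexed by triples and columns by points. If every point of each $V_i$ participates in $\gtrsim \delta|V_i|$ such triples, one gets control on the number of triples per column and per "block." The goal is to show $\rank(A)$ is large — within an additive function of $n,\delta$ of the total number of points — which forces $\adim(V)$ to be small. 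The technical heart is a matrix-scaling / rank lower bound for such sparse, "balanced" design matrices in the spirit of \cite{BDWY12}; I would expect to invoke (a suitable generalization of) their theorem that a complex matrix whose every column has few nonzeros and whose supports are sufficiently spread out must have rank close to its number of columns.

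First I would set up notation: homogenize the points so collinearity becomes the existence of a $3$-term linear dependence, and for a candidate subspace of dimension $r = \adim(V)$ pass to coordinates where $V$ spans $\R^{r+1}$. Second, I would handle a single color class using a version of the Sylvester–Gallai rank bound: if within $V_i$ a $\delta$-fraction of the pairs through each point are "good" (hit another color), then either $V_i$ itself has small affine dimension or it must send many collinearities into $V\setminus V_i$. Third — and this is where the $(1+\frac{1}{\delta-\eps})^n$ shape comes from — I would run an induction on $n$: the largest color class $V_1$ (recall $|V_1|\ge \dots \ge |V_n|$) either is essentially confined to a low-dimensional flat, in which case we project it out / absorb it, or a positive fraction of its collinearities force points of $V_1$ to lie in the span of $V_2\cup\dots\cup V_n$ together with a bounded-dimensional correction; iterating across the $n$ colors multiplies the dimension bound by a factor of roughly $(1+\frac{1}{\delta-\eps})$ each time, and the $\eps$ is the slack one pays to convert the fractional hypothesis into an honest counting/averaging bound (hence the $1/\eps^2$ and the restriction $\eps<\delta$).

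The steps, in order: (1) homogenize and define the design matrix $A$ of $3$-term dependencies; (2) prove or cite the balanced-design-matrix rank bound giving $\rank(A) \ge |V| - (\text{function of }n,\delta,\eps)$; (3) establish the base case $n=1$ (a robust Sylvester–Gallai statement) and the two-color refinement; (4) set up the inductive dichotomy on the largest remaining color class — "low-dimensional" versus "many collinearities escaping into other colors" — using an averaging argument with loss parameter $\eps$; (5) track the dimension through the $n$ iterations to obtain the stated bound, and (6) pick $C$ to absorb the constants from the matrix-rank step and the projection-to-the-plane reductions.

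The main obstacle I anticipate is step (2) together with step (4): in the purely combinatorial Motzkin–Rabin setting one can play color-swapping games (a monochromatic line would contradict the hypothesis), but in the quantitative regime we only know that \emph{most} lines through a point are bichromatic, so a naive swap fails — one must instead feed the fractional information into the design-matrix rank bound and argue that the "bad" pairs (the $(1-\delta)$-fraction with no third point of another color) can only drop the rank by a controlled amount. Making the bookkeeping in the induction survive the compounding across $n$ colors without the error blowing up worse than $(1+\frac{1}{\delta-\eps})^n$ is the delicate part; in particular one has to be careful that projecting out a low-dimensional color class does not destroy the $(\delta,n-1)$-MR property of the remaining classes, which likely requires choosing the projection generically and paying for it in the $\eps$ budget.
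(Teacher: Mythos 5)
You have correctly identified the paper's principal tool---the rank bound for design matrices (Theorem~\ref{thm-rankdesign}) applied to a matrix of $3$-term linear dependences built from collinear triples---but your plan for assembling it into the stated bound diverges from the paper and, as written, has a genuine gap. Your induction peels off the largest color class $V_1$ via a dichotomy (``$V_1$ is low-dimensional'' versus ``many collinearities force $V_1$ into the span of the rest'') and then projects it out; you yourself flag that preserving the $(\delta,n-1)$-MR hypothesis under such a projection is problematic, and indeed neither the dichotomy nor the projection step is made precise. Moreover, peeling from the top runs into a counting mismatch you do not address: the lines through two points of a large class may have their witnessing third point in a much smaller class $V_j$, whose own hypothesis is only relative to $\delta|V_j|$, so the fractional information does not transfer when that class is absorbed or removed. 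This bookkeeping is exactly where your outline would break down.

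The paper avoids projection entirely and works from the small classes, not the large ones. Order the classes by size, call an index $\eps$-large when $|V_k|\geq \frac{1}{\delta-\eps}\sum_{j>k}|V_j|$ (Definition~\ref{def-epslarge}), and group the colors into blocks ending at the $\eps$-large indices. The heart is Lemma~\ref{lem-partition}: for a three-part split $P_1$ (earlier blocks), $P_2$ (current block), $P_3$ (tail of smaller classes), the tail is simply \emph{discarded} rather than projected: since $|P_3|\leq(\delta-\eps)|V_y|$, each point $p\in V_i\subset P_2$ still has at least $\eps|V_i|$ collinear partners lying on lines that contain a third point of $P_1\cup P_2$ (distinct lost partners must escape to distinct points of $P_3$), so the submatrix $A_2$ of the dependency matrix restricted to the columns of $P_2$ is a $(3,3c_1c_2|P_2|,6)$-design matrix, and the identity $A_1M_1+A_2M_2=0$ (the $P_3$-columns of $A$ vanish) yields $\dim(P_2)\leq\dim(P_1)+12/(c_1c_2)$ with no alteration of the point set and hence no risk to the MR hypothesis. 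Corollary~\ref{cor-epslarge} supplies $c_1$ in terms of the block length, and solving the recursion $m_i\leq 2m_{i-1}+(24/\eps)(1+c_\eps)^{d_i-d_{i-1}-1}$ over the blocks, then optimizing over their number (only $k=O(1/\eps)$ matters), gives the stated $\frac{C}{\eps^2}\left(1+\frac{1}{\delta-\eps}\right)^n$; this, not a per-color multiplicative loss, is how the exponential in $n$ and the $1/\eps^2$ arise. To repair your outline, replace the ``project out a class'' step by this ``ignore the small tail and bound the current block against the span of the earlier blocks'' step; that is the missing idea.
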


Theorem~\ref{thm-main} is a multi-colored version of recent results of \cite{BDWY12, DSW12}, which give a similar `$\delta$-version' of the Sylvester-Gallai theorem (\cite{BDWY12} also establishes the $n=2$ case of Theorem~\ref{thm-main}). In fact, our proof uses one of the main results of \cite{BDWY12, DSW12} as its  principal tool. This result, given below as Theorem~\ref{thm-rankdesign}, gives a lower bound on the rank of matrices whose pattern of zeros and non-zeros satisfies a certain `design-like' condition. As the results of \cite{BDWY12,DSW12} work also over the complex numbers, our results (in particular, Theorem~\ref{thm-main}) hold also when one replaces $\R^d$ with $\C^d$ (with the same bounds).

In the next section we state some preliminaries from \cite{BDWY12,DSW12} that will be used in the proof of Theorem~\ref{thm-main}. The proof itself is given in Section~\ref{sec-proof}.

\section{Preliminaries}

The main tool in the proof is a rank lower bound for {\em design-matrices} defined in \cite{BDWY12}. For a vector $R \in \F^n$ we denote the {\em support} of $R$ by $\supp(R) = \{ i \in [n]\,\,|\,\, R_i \neq 0 \}$.

\begin{define}[Design matrix]\label{def-designmatrix}
Let $A$ be an $m \times n$ matrix over a field $\F$. Let $R_1,\ldots,R_m \in \F^n$ be  the rows of $A$ and let $C_1,\ldots,C_n \in \F^m$ be the columns of $A$. We say that $A$ is a {\em $(q,k,t)$-design matrix} if the following three conditions are satisfied:
\begin{enumerate}
\item For all $i \in [m]$, $|\supp(R_i)| \leq q$.
\item For all $j \in [n]$, $|\supp(C_j)| \geq k$.
\item For all $j_1 \neq j_2 \in [n]$, $|\supp(C_{j_1}) \cap \supp(C_{j_2}) | \leq t$.
\end{enumerate}
\end{define}

The following is a quantitative improvement of a bound originally proved in \cite{BDWY12}.

\begin{thm}[{\cite{DSW12}}]\label{thm-rankdesign}
Let $A$ by an $m \times n$ complex  matrix. If $A$ is a $(q,k,t)$ design matrix then $$\rank(A) \geq  n - \frac{ntq(q-1)}{k}.$$
\end{thm}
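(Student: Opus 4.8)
\noindent\emph{Proof proposal.} The plan is to derive the rank lower bound from a second-moment estimate on the Gram matrix of a rescaled copy of $A$, feeding in the three design conditions through two applications of the Cauchy--Schwarz inequality. Throughout, $M^\dagger$ denotes the conjugate transpose and $\langle u,v\rangle$ the standard Hermitian inner product.

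\textbf{Step 1: rescaling to a balanced matrix.} First I would replace $A$ by $B=D_1AD_2$ for positive diagonal matrices $D_1\in\R^{m\times m}$, $D_2\in\R^{n\times n}$ chosen so that $B$ is \emph{doubly balanced}: every column of $B$ has unit Euclidean norm and all rows of $B$ have a common Euclidean norm. Since $D_1,D_2$ are invertible, $B$ has exactly the same zero/nonzero pattern as $A$ (hence is again a $(q,k,t)$-design matrix) and $\rank(B)=\rank(A)$; and since the squared Frobenius norm of $B$ equals $\sum_j\|C_j(B)\|_2^2=n$, every row $R_i$ of $B$ then satisfies $\|R_i\|_2^2=n/m$. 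The existence of such a rescaling is the substantive external input, and I would invoke the matrix-scaling machinery of \cite{BDWY12} for it (including its handling of degenerate support patterns, where the balanced matrix is only approached in a limit). I expect \emph{this} to be the main obstacle: the remaining steps are elementary, whereas constructing the balancing scaling, and coping with non-scalable patterns, is the genuine technical core and is essentially a theorem in its own right.

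\textbf{Step 2: rank from the Gram matrix.} Next I would work with the $n\times n$ Gram matrix $M=B^\dagger B$ of the columns $C_1,\dots,C_n$ of $B$. It is positive semidefinite with $\rank(M)=\rank(B)=\rank(A)$ and $M_{jj}=\|C_j\|_2^2=1$, so $\trace(M)=n$. Applying Cauchy--Schwarz to the nonzero eigenvalues yields the standard bound $\rank(M)\ge(\trace M)^2/\trace(M^2)$ valid for any PSD matrix, whence
$$\rank(A)\ \ge\ \frac{n^2}{\trace(M^2)}\ =\ \frac{n^2}{\,n+\sum_{j_1\neq j_2}|\langle C_{j_1},C_{j_2}\rangle|^2\,},$$
the sum being over ordered pairs. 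It therefore suffices to establish the off-diagonal estimate $\sum_{j_1\neq j_2}|\langle C_{j_1},C_{j_2}\rangle|^2\le ntq(q-1)/k$, since then $\rank(A)\ge n^2/(n+ntq(q-1)/k)=n/(1+tq(q-1)/k)\ge n-ntq(q-1)/k$.

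\textbf{Step 3: bounding the off-diagonal mass.} For fixed $j_1\neq j_2$, condition~3 says that $\langle C_{j_1},C_{j_2}\rangle=\sum_{i\in\supp(C_{j_1})\cap\supp(C_{j_2})}\overline{B_{i,j_1}}B_{i,j_2}$ has at most $t$ nonzero terms, so Cauchy--Schwarz gives $|\langle C_{j_1},C_{j_2}\rangle|^2\le t\sum_{i}|B_{i,j_1}|^2|B_{i,j_2}|^2$, the sum over $i\in\supp(C_{j_1})\cap\supp(C_{j_2})$. Summing over ordered pairs $j_1\neq j_2$ and exchanging the order of summation reorganizes this as a sum over rows:
$$\sum_{j_1\neq j_2}|\langle C_{j_1},C_{j_2}\rangle|^2\ \le\ t\sum_{i=1}^m\ \sum_{\substack{j_1\neq j_2\\ j_1,j_2\in\supp(R_i)}}|B_{i,j_1}|^2|B_{i,j_2}|^2\ =\ t\sum_{i=1}^m\Big(\|R_i\|_2^4-\sum_{j\in\supp(R_i)}|B_{i,j}|^4\Big).$$
Now I would use condition~1 twice. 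Applying Cauchy--Schwarz to the at most $q$ terms of $\sum_{j\in\supp(R_i)}|B_{i,j}|^2=\|R_i\|_2^2$ gives $\sum_{j\in\supp(R_i)}|B_{i,j}|^4\ge\|R_i\|_2^4/q$, so each bracketed term is at most $\tfrac{q-1}{q}\|R_i\|_2^4$; and Step~1 gives $\sum_i\|R_i\|_2^4=m(n/m)^2=n^2/m$, with $n^2/m\le nq/k$ because $mq\ge\sum_i|\supp(R_i)|=\sum_j|\supp(C_j)|\ge nk$ by conditions~1 and~2. Combining,
$$\sum_{j_1\neq j_2}|\langle C_{j_1},C_{j_2}\rangle|^2\ \le\ t\cdot\frac{q-1}{q}\cdot\frac{nq}{k}\ =\ \frac{nt(q-1)}{k}\ \le\ \frac{ntq(q-1)}{k},$$
which is the off-diagonal estimate demanded in Step~2 and completes the argument. (Note that $t$ enters through the $\le t$ terms in each inner product, $q$ through both the convexity bound inside a single row and the counting $mq\ge nk$, and $1/k$ through the column-support lower bound forcing $m$ to be large.)
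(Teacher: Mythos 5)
First, note that the paper itself gives no proof of this statement: Theorem~\ref{thm-rankdesign} is imported verbatim from \cite{DSW12}, so the relevant comparison is with the proof there. Your Steps 2--3 are essentially that proof: the improvement of \cite{DSW12} over \cite{BDWY12} is precisely to apply the bound $\rank(M)\geq \trace(M)^2/\trace(M^2)$ to the Gram matrix of a rescaled matrix, bounding the off-diagonal mass by Cauchy--Schwarz over the at most $t$ common support positions, a per-row estimate using $q$, and the count $mq\geq \sum_j|\supp(C_j)|\geq nk$. Your arithmetic there is correct and in fact yields the slightly stronger bound $n-nt(q-1)/k$, which implies the stated one.

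The gap is in Step 1. The doubly balanced scaling you assume --- all columns of norm exactly $1$ \emph{and} all rows of common norm $\sqrt{n/m}$ --- does not exist for a general $(q,k,t)$-design matrix, not even as a limit, so the appeal to the scaling machinery of \cite{BDWY12} does not deliver what you ask of it. Concretely, the $2\times 3$ matrix with rows $(1,1,1)$ and $(0,0,1)$ is a $(3,1,1)$-design matrix, yet any scaling with unit columns has second-row norm at most $1<\sqrt{3/2}$, so the rows cannot be equalized even approximately. The obstruction is a Hall-type condition: double balance requires, for every set $R$ of rows, that the number of columns supported entirely inside $R$ be at most $|R|\,n/m$, and combined with $mq\geq nk$ this essentially forces $mq=nk$ (near-regularity), which design matrices need not satisfy. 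The scaling lemma actually available is one-sided: for every $\eps>0$ one can scale so that all columns have norm $1$ while the row norms (equivalently, the entries) are only bounded \emph{above}, by roughly $\sqrt{(1+\eps)q/k}$; verifying that this weaker target is attainable is exactly where the design hypotheses enter the scaling step, and it is a point you cannot wave at, since your exact statement is false. Fortunately your argument survives the substitution with a one-line change: replace $\sum_i\|R_i\|_2^4=n^2/m\leq nq/k$ by $\sum_i\|R_i\|_2^4\leq (\max_i\|R_i\|_2^2)\sum_i\|R_i\|_2^2\leq (1+\eps)nq/k$, and let $\eps\to 0$ at the end (the rank is an integer and the bound depends continuously on $\eps$). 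With that repair your proposal is correct and coincides with the proof in \cite{DSW12}.
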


Another  lemma we will use is the following lemma whose proof is a simple consequence of the existence of diagonal Latin squares.

\begin{lem}[{\cite[Lemma 2.1]{BDWY12}}]\label{lem-triples}
	Let $r \geq 3$. Then there exists a set $T \subset [r]^3$ of $r^2 - r$ triples that satisfies the following properties.
\begin{enumerate}
	\item Each triple $(t_1, t_2, t_3) \in T$ consists of three distinct elements.
	\item For each $i \in [r]$ there are exactly $3(r - 1)$ triples in $T$ that contain $i$ as an element.
	\item For every pair $i, j \in [r]$ of distinct elements there are at most 6 triples in $T$ which contain both $i$ and $j$ as elements.
\end{enumerate}
\end{lem}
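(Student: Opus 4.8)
The plan is to realize $T$ as the set of ``off-diagonal'' cells of an idempotent Latin square. Recall that an \emph{idempotent Latin square} of order $r$ is an array $L=(L_{ij})_{i,j\in[r]}$, with entries in $[r]$, in which every row and every column is a permutation of $[r]$ and $L_{ii}=i$ for all $i$. Such squares exist for every $r\geq 3$: when $r$ is odd one identifies $[r]$ with $\Z/r\Z$ and takes $L_{ij}=2^{-1}(i+j)$, which is a Latin square because $2$ is invertible modulo an odd number and which satisfies $L_{ii}=i$; the case of even $r$ is a classical fact about Latin squares. Fixing such an $L$, I would set
\[
T=\bigl\{\,(i,j,L_{ij})\ :\ i,j\in[r],\ i\neq j\,\bigr\}.
\]
Since $(i,j)\mapsto L_{ij}$ is injective on each row, distinct pairs $(i,j)$ yield distinct triples, so $|T|=r^2-r$.

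The one place idempotency is really needed is property~1. In row $i$ the symbol $i$ occurs exactly once, and because $L_{ii}=i$ it occurs in column $i$; hence $L_{ij}\neq i$ for all $j\neq i$. Symmetrically, in column $j$ the symbol $j$ occurs only at $L_{jj}=j$, so $L_{ij}\neq j$ for all $i\neq j$. Together with $i\neq j$, this shows that every triple of $T$ has three distinct coordinates.

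For property~2, fix $k\in[r]$ and count the triples of $T$ containing $k$; since the coordinates of a triple are distinct, $k$ occupies at most one of them, so the three contributions below are disjoint. There are $r-1$ triples with $k$ in the first coordinate and $r-1$ with $k$ in the second. The symbol $k$ occupies exactly $r$ cells of $L$, one per row, and by idempotency the only one on the diagonal is $(k,k)$; hence $r-1$ off-diagonal cells carry symbol $k$, contributing $r-1$ triples with $k$ in the third coordinate. So $k$ lies in exactly $3(r-1)$ triples. For property~3, fix distinct $k,l\in[r]$. A triple of $T$ containing both of them has $\{k,l\}$ occupying two of its three coordinates, in one of $6$ ordered arrangements, and each arrangement is realized by at most one triple of $T$: ``$k$ first and $l$ second'' forces $(k,l,L_{kl})$; ``$k$ first and $l$ third'' forces $(k,c,l)$ with $c$ the unique column satisfying $L_{kc}=l$; ``$k$ second and $l$ third'' forces $(a,k,l)$ with $a$ the unique row satisfying $L_{ak}=l$; and the remaining three arrangements come from interchanging $k$ and $l$. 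Summing over the six arrangements, at most $6$ triples of $T$ contain both $k$ and $l$.

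There is no genuine obstacle here. The only non-elementary input is the existence of idempotent Latin squares of every order $r\geq 3$, which is standard, and everything else is a short counting argument with the Latin property. The mildest subtlety is in property~3, where one should bound the number of triples through the six coordinate arrangements for $\{k,l\}$, each realized by at most one triple of $T$ --- which is exactly why the statement is phrased as ``at most $6$'' rather than an equality.
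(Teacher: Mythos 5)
Your proof is correct and is essentially the argument the paper has in mind: the paper explicitly describes this lemma as a consequence of the existence of diagonal Latin squares, and your construction (off-diagonal cells $(i,j,L_{ij})$ of an idempotent Latin square, which is just a diagonal Latin square after relabeling symbols) together with the counting of row/column/symbol occurrences and the six ordered placements of a pair is exactly that proof.
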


\section{Proof of the main theorem}\label{sec-proof}

Before giving the proof of Theorem~\ref{thm-main} we prove some useful lemmas. The first is the technical heart of the proof and its proof utilizes the rank bound for design matrices (Theorem~\ref{thm-rankdesign}). In the following we will denote by $\dim(S)$ the dimension of the subspace spanned by a set $S$. Notice that, since $\adim(S) \leq \dim(S)$, we can bound $\dim(V)$ instead of $\adim(V)$.

\begin{lem}\label{lem-partition}
Let $V = \bigcup_{i=1}^n V_i$ be a $(\delta, n)$-MR  configuration in $\R^d$. Let $x, y$ be indices with $0 \leq x < y \leq n$. Let $P_1 = \bigcup_{i=1}^x V_i$, let $P_2 = \bigcup_{i=x+1}^y V_i$, and let $P_3 = \bigcup_{i=y+1}^n V_i$ ($P_1$ and $P_3$ might be empty if $x=0$ or $y=n$). Suppose that for some constants $c_1, c_2 > 0$ the following two inequalities hold:
\begin{eqnarray}
 |V_y| \geq c_1|P_2|, \label{eq-sizes1}\\
(\delta - c_2)|V_y| \geq |P_3|.\label{eq-sizes2}	
\end{eqnarray}
Then $\dim(P_2) \leq \dim(P_1) + 12/(c_1 c_2)$.
\end{lem}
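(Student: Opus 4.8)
The goal is to bound $\dim(P_2)$ in terms of $\dim(P_1)$ by constructing a design matrix whose rank is controlled, where the columns correspond (roughly) to the points of $P_2$. The natural object is a matrix $A$ whose columns are indexed by the points of $V$ and whose rows encode collinearity relations: for each collinear triple $\{u,v,w\}$ with $u,v \in V_y$ (or more generally in $P_2$) and $w \in V\setminus V_y$ coming from the $(\delta,n)$-MR hypothesis, we include a row supported on the three coordinates $u,v,w$ with entries the (unique up to scaling) coefficients $a,b,c$ such that $a\cdot u + b\cdot v + c\cdot w = 0$ as an affine dependence (i.e. $a+b+c=0$ too). If $M$ is the matrix whose rows are the coordinates of the points of $V$ (so $\dim(V)$ is essentially $\rank(M)$), then $A \cdot M = 0$, so $\rank(A) \le |V| - \dim(V)$, equivalently $\dim(V) \le |V| - \rank(A)$. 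Restricting attention to the columns indexed by $P_2$ and using that the other columns (those in $P_1 \cup P_3$) are ``few'' will let us isolate $\dim(P_2)$ against $\dim(P_1)$; the role of the size hypotheses \eqref{eq-sizes1} and \eqref{eq-sizes2} is exactly to make the $P_3$ columns and the within-$P_2$ structure behave well.

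First I would set up the row set carefully so that $A$ becomes a $(q,k,t)$-design matrix when restricted to the $P_2$-columns. For each point $v \in V_y$, the hypothesis gives $\ge \delta |V_y|$ partners $u \in V_y$ with a third point $w$ of a different color; after discarding the partners whose third point lies in $P_3$ — there are at most $|P_3| \le (\delta - c_2)|V_y|$ of those by \eqref{eq-sizes2} — at least $c_2|V_y|$ partners remain whose third point lies in $P_1$ (recall $w$ has a different color than $v$, so $w \notin V_y$; and $w \notin P_2 \setminus V_y$ requires a little care — see below). Using Lemma~\ref{lem-triples} with $r$ roughly $|V_y|$ I would bundle these incidences into triples so that each column in $V_y$ is hit $\Theta(|V_y|)$ times while each pair of columns is hit $O(1)$ times: this is the standard device from \cite{BDWY12} for turning ``$\delta$-many incidences per point'' into a genuine design. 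The row support is $q=3$, the column support for each $v\in V_y$ is $k = \Theta(c_2 |V_y|)$, and pairwise column intersections are $t = O(1)$. Applying Theorem~\ref{thm-rankdesign} to the submatrix $A'$ of these rows restricted to the columns indexed by $V_y$ gives $\rank(A') \ge |V_y| - O(|V_y|/(c_2 |V_y|)) \cdot |V_y| = |V_y| - O(|V_y|/c_2)$ — wait, more precisely $\rank(A') \ge |V_y|(1 - O(1/c_2))$, but this isn't yet quite what I want; the useful reading is that the column space of $A'$ restricted to $V_y$-coordinates has codimension $O(|V_y|/c_2)$ inside $\F^{V_y}$.

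The translation to a dimension bound goes as follows. Each row of $A'$ is an affine dependence supported on two points of $V_y$ and one point of $P_1$. Projecting the coordinates of the points modulo the span of $P_1$: every such row becomes a (linear, since $a+b+c=0$ lets us absorb constants) relation $a\bar u + b\bar v = 0$ among the images $\bar u, \bar v$ of points of $V_y$ in the quotient space $\R^d / \span(P_1)$ — in fact it forces $\bar u$ and $\bar v$ to be parallel. Running this over all rows and using that $\rank(A')$ is large (codimension $O(|V_y|/c_2)$ in $\F^{V_y}$) forces the vectors $\{\bar v : v \in V_y\}$ to span a space of dimension $O(|V_y|/c_2)$ divided by... hmm — actually the clean statement is that a high-rank set of $2$-sparse relations among $|V_y|$ vectors forces those vectors to lie in few ``parallel classes,'' hence $\dim(\{\bar v\}) \le |V_y| - \rank(A') = O(|V_y|/c_2)$. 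Then $\dim(V_y) \le \dim(P_1) + O(|V_y|/c_2)$. Finally I would invoke \eqref{eq-sizes1}, $|V_y| \ge c_1|P_2|$, together with the ordering $|V_{x+1}| \ge \cdots \ge |V_y|$, so that $|P_2| \le \sum_{i=x+1}^y |V_i| \le (y-x)|V_{x+1}|$ — this direction is the wrong way, so instead the point must be that $\dim(P_2) \le \sum_{i=x+1}^y \dim(V_i)$ and we run the above argument for \emph{each} $V_i$ with $x<i\le y$, not just $V_y$, getting $\dim(V_i) \le \dim(P_1 \cup V_{i+1} \cup \cdots) + O(|V_i|/c_2)$ and telescoping, with $\sum_i |V_i| = |P_2| \le |V_y|/c_1 \le |V_i|/c_1$... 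I'd need to be careful, but the arithmetic should collapse to $\dim(P_2) \le \dim(P_1) + 12/(c_1c_2)$.

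**The main obstacle.** The delicate point — and where I expect most of the real work — is ensuring that, after throwing away the $\ge |V_y| - |P_3|$ bad partners, enough partners remain whose third point lies in $P_1$ \emph{specifically}, not merely in $V \setminus V_y$. A third point $w$ of a color different from $v$'s could lie in $P_2 \setminus V_y$ (a color strictly between $x$ and $y$), which would spoil the ``projection kills the third coordinate'' step. The resolution is presumably to apply the lemma inductively/recursively with a telescoping choice of $y$, or to observe that such a $w$ lies in some $V_j$ with $j < y$ hence (by a prior stage of the telescoping over colors inside $P_2$) is already controlled; making this bookkeeping airtight, and tracking how the constants $12/(c_1c_2)$ emerge from the $q(q-1)=6$ in Theorem~\ref{thm-rankdesign} combined with the factor $6$ from Lemma~\ref{lem-triples}, is the crux. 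A secondary technical nuisance is the case $|V_y|$ small (say $< 3$), where Lemma~\ref{lem-triples} doesn't apply and one must handle the configuration by hand, but there $|P_2| \le |V_y|/c_1$ is itself bounded so the dimension bound is trivial.
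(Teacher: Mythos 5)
The toolbox you reach for (collinearity triples via Lemma~\ref{lem-triples}, a matrix $A$ with $AM=0$, the rank bound of Theorem~\ref{thm-rankdesign}) is the right one, but the step you yourself flag as the ``main obstacle'' is a genuine gap, and the fix you gesture at does not work as sketched. Your construction needs the third point of each triple to lie in $P_1$, so that reducing modulo the span of $P_1$ turns each triple into a two-term relation among points of $V_y$; the $(\delta,n)$-MR hypothesis, however, only provides a third point somewhere in $V\setminus V_i$, and after discarding the at most $|P_3|$ partners whose only extra point is in $P_3$, the surviving third points can still lie in $P_2\setminus V_i$. Your proposed repair (recursing/telescoping over the colors inside $P_2$) is circular: for a pair inside $V_i$ the third point may lie in a color of $P_2$ of either smaller or larger index, so no ordering of $V_{x+1},\dots,V_y$ makes a one-directional induction close up --- each $\dim(V_i)$ would be bounded in terms of a set containing other $V_j\subset P_2$ whose bound in turn involves $V_i$. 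The paper's proof sidesteps this entirely by never requiring the third point to be in $P_1$: it calls a line extraordinary if it meets $P_2$ and contains at least three points of $P_1\cup P_2$, applies Lemma~\ref{lem-triples} to \emph{all} points of $P_1\cup P_2$ on each such line (points of $P_1$ included), builds a single matrix $A$ on all of $V$ with $AM=0$, notes that the $P_3$-columns of $A$ vanish, and splits $A_1M_1+A_2M_2=0$. The design property is then needed only for $A_2$, whose columns are indexed by all of $P_2$ at once (each column has support at least $3c_1c_2|P_2|$, using (\ref{eq-sizes1}), (\ref{eq-sizes2}) and $|V_i|\ge |V_y|$ for $x<i\le y$), while the $P_1$-columns are simply absorbed into $A_1M_1$, whose rank is at most $\dim(P_1)$; the inequality $\rank(A_2M_2)\ge \rank(M_2)-(|P_2|-\rank(A_2))$ finishes the argument in one shot. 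That all-of-$P_2$-at-once splitting, replacing your per-color projection, is the missing idea.

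Two secondary points. First, your quantitative reading of Theorem~\ref{thm-rankdesign} is off: with columns restricted to $V_y$ and column support $\Theta(c_2|V_y|)$, the rank deficiency is $O(1/c_2)$, a constant, not $O(|V_y|/c_2)$ as you write, and the subsequent ``parallel classes'' accounting that is supposed to collapse to $12/(c_1c_2)$ is never carried out. (In the paper the constant arises cleanly as $t\,q(q-1)=6\cdot 3\cdot 2$ divided by $k=3c_1c_2|P_2|$, times $|P_2|$.) Second, the degenerate case you worry about ($|V_y|<3$, where Lemma~\ref{lem-triples} would not apply) is not an issue in the paper's argument, since the lemma is applied per extraordinary line (which has at least three associated points), not to $V_y$ itself. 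These are repairable, but together with the unresolved treatment of third points in $P_2\setminus V_y$, the proposal does not yet amount to a proof.
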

\begin{proof}
	We start by noting that, since $|V_1| \geq |V_2| \geq \ldots \geq |V_n|$, inequalities (\ref{eq-sizes1}) and (\ref{eq-sizes2}) in the lemma statement, $|V_y| \geq c_1|P_2|$ and $(\delta - c_2)|V_y| \geq |P_3|$,  also hold when $|V_y|$ is replaced with $|V_i|$, for $i < y$.

	We will call a line $L$ {\em extraordinary} with respect to the   configuration $V$ if (1) $L$ passes through at least one point of $P_2$ and (2) $L$ passes through at least three points of $P_1 \cup P_2$. We will refer to the points of $P_1 \cup P_2$ that lie on some extraordinary line $L$ as the \emph{points associated with $L$} (such a line $L$ might contain additional points from $P_3$ which are not associated with it).

Let $L_1, L_2, ..., L_k$ be an enumeration of the extraordinary lines of our configuration and let $\ell_i$ denote the number of points associated with $L_i$, for $1 \leq i \leq k$.

	For each extraordinary line $L_i$ we  construct, using Lemma~\ref{lem-triples}, a set  $T_i$ of $\ell_i^2 - \ell_i$ triples of points so that (1) each triple in $T_i$ consists of three distinct points associated with $L_i$; (2) for any point $v$ associated with $L_i$, there are exactly $3(\ell_i - 1)$ triples in $T_i$ that contain $v$; and (3) for any two points $u \neq v$ associated with $L_i$, there are at most 6 triples in $T_i$ that contain both $u$ and $v$. Let $$T = \bigcup_{i=1}^k T_i.$$

	Next, let $m = |V|$ and let $M$ be the $m \times d$ matrix whose rows are defined by the points of $V$ (in some choice of coordinates for $\R^d$). We will now define a matrix $A$ that will satisfy  $A\cdot M = 0$. Each triple in $T$ will correspond to one row of $A$. Every triple $t = (t_1, t_2, t_3) \in T$ consists of three distinct points in $P_1 \cup P_2$ that are collinear. Since they are collinear, there are coefficients $h_1, h_2, h_3$, not all zero, such that $$h_1t_1 + h_2t_2 + h_3t_3 = 0$$ (treating the points as vectors). We set the  $t$'th row of $A$ to have entries $h_1,h_2,h_3$ in the positions corresponding to the three points $t_1,t_2,t_3$ (we can do that since the columns of $A$ are indexed by $V$) and zero elsewhere. Observe that $A$ is a $|T| \times m$ matrix, since there is a bijection between the elements of $T$ and the rows of $A$. Since the product of any row of $A$ with $M$ is $0$, we must also have that $$A\cdot M = 0.$$

	There is a bijection between the rows of the matrix $M$ and the points in the set $V$. Therefore, any subset of the set $V$ corresponds to a submatrix of the matrix $M$, obtained by taking only those rows that correspond to the points in the subset. Let $M_1$ denote the submatrix of $M$ corresponding to the point set $P_1$, and likewise let $M_2$ and $M_3$ be the submatrices corresponding to $P_2$ and $P_3$. Let $A_1$ be the submatrix of $A$ obtained by taking those \emph{columns} of $A$ whose indices match the indices of the \emph{rows} of $M_1$ (that is, with indices corresponding to elements of $P_1$). Define $A_2$ and $A_3$ analogously (with columns in $P_2$ and $P_3$ respectively). Observe that $A_1M_1, A_2M_2,$ and $A_3M_3$ are all valid matrix products, and that $$A_1M_1 + A_2M_2 + A_3M_3 = AM =  0.$$

	From the definition of the matrix $A$ we have that the column corresponding to any given point in $P_3$ contains only $0$'s; therefore $A_3 = 0$, and so $A_3M_3 = 0$. Hence  $A_1M_1 + A_2M_2 = 0$ which gives
\begin{equation}\label{eq-dimA2M2}
\rank(A_2M_2) = \rank(A_1M_1) \leq  \rank(M_1) = \dim(P_1).
\end{equation}
(If $|P_1|$ is empty we get $A_2M_2 = 0$ and the rest of the proof is the same).
	
We now claim that:
 
\begin{claim}
$A_2$ is a $(3, 3c_1c_2|P_2|, 6)$-design matrix.
\end{claim}
\begin{proof}
	By the construction of $A$ each row contains at most three non-zero terms. Since $A_2$ is a submatrix of $A$, each row of $A_2$ can contain at most three non-zero terms. Similarly, by the construction of $A$, any two columns can share at most six non-zero locations; and again this holds for $A_2$ as well. Finally, we claim that each column of $A_2$ contains at least $3c_1c_2|P_2|$ non-zero entries. 

		Consider a column $C$ of $A_2$. This column corresponds to a point $p$ in $P_2$. The number of non-zero entries of $C$ is exactly equal to the number of triples in $T$ that contain the point $p$. Suppose that $p \in V_i \subset P_2$ for some $i$. We claim that there must be at least $\delta|V_i| - |P_3|$  points $q \neq p$ that lie on extraordinary lines through $p$. Observe that this quantity is at least $c_2|V_i|$ by  inequality (\ref{eq-sizes2}). Indeed, there are at least $\delta |V_i|$ points $q \neq p$ in $V_i$, for which the line through $q, p$ contains a point from some $V_j$, with $j \not= i$, because the configuration is $(\delta, n)$-MR. Let us denote this set of at least $\delta |V_i|$ points by $S$. For each point $q$ in $S$, either the line through $q, p$ contains a third point from $P_1 \cup P_2$, and is therefore an extraordinary line, \emph{or} (1) it contains no other points from $P_1 \cup P_2$, and (2) it contains some point $r$ from $P_3$.

		Thus, each point $q \in S$ that is not associated with any of the extraordinary lines passing through $p$ corresponds to some point $r \in P_3$. Since no two $q_1 \neq  q_2 \in S$ can correspond to the same $r$, at most $|P_3|$ of the points in $S$ are \emph{not} associated with any of the extraordinary lines passing through $p$. Thus, the remaining $\delta|V_i| - |P_3|$ points are associated with one of the extraordinary lines passing through $p$.
		
Now, if a given extraordinary line $L$ passes through $p$, and if there are $\ell$ points associated with $L$ besides $p$, then that line contributes $3\ell$ triples to $T$ that contain $p$. Therefore, since we showed that there are at least $c_1 c_2 |P_2|$ points other than $p$ that lie on the extraordinary lines passing through $p$, there must be at least $3c_1 c_2|P_2|$ triples in $T$ that contain $p$.

We conclude that that the point $p \in V_i$ is in  at least $3c_2|V_i|$ triples; and since $|V_i| \geq c_1|P_2|$ (by inequality (\ref{eq-sizes1})), this quantity is at least $3c_1c_2|P_2|$ such points and so $A_2$ is indeed a $(3, 3c_1c_2|P_2|, 6)$ design matrix as claimed.
\end{proof}

Applying Theorem~\ref{thm-rankdesign} we have that $$\rank(A_2) \geq |P_2| - 12/(c_1c_2).$$ Now, using basic linear algebra, we get that $$\rank(A_2M_2) \geq \rank(M_2) - (|P_2| - \rank(A_2)) \geq \rank(M_2)  - 12/(c_1c_2).$$  Using Eq.~(\ref{eq-dimA2M2}) we immediately get $$\rank(M_2) \leq \dim(P_1) + 12/(c_1c_2),$$ which implies $\dim(P_2) \leq \dim(P_1) + 12/(c_1c_2)$ as was required. This completes the proof of Lemma~\ref{lem-partition}.
\end{proof}

To state the next lemma we will need the following definition.

\begin{define}[$\eps$-large and $\eps$-small indices]\label{def-epslarge}
Let $V_1,\ldots,V_n \subset \R^d$ be a $(\delta,n)$-MR configuration and let $c_\eps = 1/(\delta-\eps)$ with $0 < \eps < \delta$ some real number. We call an index $k \in [n]$ an {\em $\eps$-large} index if  $$|V_k| \geq c_\epsilon(|V_{k + 1}| + |V_{k + 2}| + ... + |V_n|),$$ otherwise we say that $k$ is {\em $\eps$-small}. By convention, we say that $n$ is always $\eps$-large.
\end{define}

\begin{lem}\label{lem-epslarge}
Let $V_1, V_2, ..., V_n \subset \R^d$ be a $(\delta, n)$-MR configuration, and suppose $x$ and $y$ are integers with $0 \leq x < y \leq n$ such that $y$ is an $\epsilon$-large index, and each of the indices $x + 1, x + 2, ..., y - 2, y - 1$ is $\epsilon$-small. Then, for each $i$ with $0 \leq i \leq y - x - 1$ we have  $$\sum_{j \geq y - i} |V_j| \leq 2(1 + c_\epsilon)^i \cdot |V_y|.$$
\end{lem}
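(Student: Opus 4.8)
The plan is to induct on $i$. The base case $i=0$ reads $|V_y| \le 2 |V_y|$, which is trivially true. Now suppose the bound $\sum_{j \ge y-i}|V_j| \le 2(1+c_\eps)^i |V_y|$ holds for some $i$ with $0 \le i \le y-x-2$; I want to deduce the bound for $i+1$, i.e. I must control the extra term $|V_{y-i-1}|$ and show $\sum_{j \ge y-i-1}|V_j| \le 2(1+c_\eps)^{i+1}|V_y|$.

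The index $y-i-1$ lies in the range $\{x+1,\ldots,y-1\}$ (since $1 \le i+1 \le y-x-1$), so by hypothesis it is $\eps$-small. By Definition~\ref{def-epslarge} this means $|V_{y-i-1}| < c_\eps \sum_{j \ge y-i}|V_j|$. Combining with the induction hypothesis,
\begin{equation}
\sum_{j \ge y-i-1}|V_j| = |V_{y-i-1}| + \sum_{j \ge y-i}|V_j| < (1+c_\eps)\sum_{j\ge y-i}|V_j| \le (1+c_\eps)\cdot 2(1+c_\eps)^i|V_y| = 2(1+c_\eps)^{i+1}|V_y|,
\end{equation}
which is exactly the desired inequality for $i+1$. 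This closes the induction.

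I expect there to be essentially no obstacle here; the lemma is a purely arithmetic consequence of the definition of $\eps$-small together with a one-line induction. The only points requiring a moment's care are (a) checking that the index $y-i-1$ really does fall in the $\eps$-small range $\{x+1,\dots,y-1\}$ for the relevant values of $i$ — which follows from $0 \le i \le y-x-2$ giving $x+1 \le y-i-1 \le y-1$ — and (b) noting that the factor $2$ in the statement is not actually needed for the induction itself (the inequality would propagate with the factor $1$), but is harmless and presumably kept for uniformity with how the lemma is invoked later. No use of the geometry of the configuration, of Lemma~\ref{lem-partition}, or of the rank bound is required for this particular statement.
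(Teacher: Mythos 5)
Your inductive step is exactly the paper's argument and is fine, but your base case is wrong as stated, and this is a genuine gap rather than a cosmetic slip. For $i=0$ the claimed inequality is $\sum_{j \geq y} |V_j| \leq 2|V_y|$, and the sum runs over \emph{all} indices $j \geq y$, i.e.\ it equals $|V_y| + |V_{y+1}| + \cdots + |V_n|$; it does not reduce to $|V_y| \leq 2|V_y|$ unless $y = n$, which is not assumed (in the application $y = d_i$ is typically an interior $\eps$-large index). To handle the tail you must use the hypothesis that $y$ is $\eps$-large — a hypothesis your proof never invokes anywhere: $\eps$-largeness gives $|V_y| \geq c_\eps \sum_{j > y} |V_j|$, hence $\sum_{j > y} |V_j| \leq |V_y|/c_\eps \leq |V_y|$ because $c_\eps = 1/(\delta - \eps) > 1$ (as $\delta < 1$), and therefore $\sum_{j \geq y} |V_j| \leq (1 + 1/c_\eps)|V_y| \leq 2|V_y|$. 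This is precisely how the paper proves the base case.

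Your remark (b), that the factor $2$ is not needed and the inequality would propagate with factor $1$, is symptomatic of the same misreading: the induction does propagate any constant, but the constant $2$ is forced at $i=0$, where it is exactly what absorbs the tail $\sum_{j>y}|V_j|$. With factor $1$ the base case would assert $\sum_{j \geq y}|V_j| \leq |V_y|$, which is false whenever any $V_j$ with $j > y$ is nonempty. Once you repair the base case as above, the rest of your induction (using that $y-i-1 \in \{x+1,\ldots,y-1\}$ is $\eps$-small to get $|V_{y-i-1}| < c_\eps \sum_{j \geq y-i}|V_j|$ and then folding in the induction hypothesis) coincides with the paper's proof.
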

\begin{proof}
We will prove the lemma by induction on $i$. To prove the base case, $i = 0$, we need to show that $$\sum_{j \geq y} |V_j| \leq 2 |V_y|.$$ Since $y$ is an $\epsilon$-large index, we have  $$|V_y| \geq c_\epsilon \sum_{j > y} |V_j|$$ and  so $\sum_{j > y} |V_j| \leq 1/c_\epsilon |V_y|.$ By adding $|V_y|$ to both sides we immediately have that $$\sum_{j \geq y} |V_j| \leq (1 + 1/c_\epsilon) |V_y|$$ which gives the desired bound since $c_\eps >1$ and so $1 + 1/c_\eps < 2$.

	Now suppose the claim holds for $i = k$. We wish to show that it also holds for $i = k + 1$, assuming that $k + 1 \leq y - x - 1$. From the induction we have that $$\sum_{j \geq y - k} |V_j| \leq 2(1 + c_\epsilon)^k |V_y|.$$ We also know that $y- (k + 1)$ is an $\epsilon$-small index, so $$|V_{y - (k + 1)}| \leq c_\epsilon \sum_{j \geq y - k} |V_j|.$$  Substituting the first inequality into the second gives $$|V_{y - (k + 1)}| < 2 c_\epsilon (1 + c_\epsilon)^k |V_y|.$$ Then adding this inequality to the first inequality yields the desired result.
	
\end{proof}

\begin{cor}\label{cor-epslarge}
Under the same notations and conditions as Lemma~\ref{lem-epslarge}, we have:
$$|V_y|\geq \frac{1}{2(1 + c_\epsilon)^{y - x - 1}}\sum_{j=x+1}^{y}|V_j|.$$
\end{cor}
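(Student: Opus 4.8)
The plan is to derive Corollary~\ref{cor-epslarge} directly from Lemma~\ref{lem-epslarge} by taking the largest admissible value of $i$ and doing an elementary estimate on the resulting partial sum. Since the hypotheses of the corollary are identical to those of the lemma, I may use the conclusion $\sum_{j \geq y-i}|V_j| \leq 2(1+c_\epsilon)^i |V_y|$ for every $i$ with $0 \leq i \leq y-x-1$ for free.

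First I would set $i = y-x-1$, which is allowed by the lemma. Then $y-i = x+1$, so the lemma's bound reads $\sum_{j \geq x+1}|V_j| \leq 2(1+c_\epsilon)^{y-x-1}|V_y|$. The only subtlety is that the sum on the left runs over all $j \geq x+1$, i.e. $j \in \{x+1, x+2, \ldots, n\}$, whereas the corollary asks for the truncated sum $\sum_{j=x+1}^{y}|V_j|$. But since each $|V_j| \geq 0$, the truncated sum is at most the full sum: $\sum_{j=x+1}^{y}|V_j| \leq \sum_{j \geq x+1}|V_j| \leq 2(1+c_\epsilon)^{y-x-1}|V_y|$. Rearranging this last inequality gives exactly $|V_y| \geq \frac{1}{2(1+c_\epsilon)^{y-x-1}}\sum_{j=x+1}^{y}|V_j|$, as claimed.

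There is essentially no obstacle here: the corollary is a one-line consequence of the lemma specialized to the extreme index and the trivial observation that dropping nonnegative terms only decreases a sum. The only thing to double-check is that $i = y-x-1$ genuinely satisfies the lemma's constraint $0 \leq i \leq y-x-1$, which holds with equality, and that $y-x-1 \geq 0$, which follows from $x < y$. So the write-up is just: invoke Lemma~\ref{lem-epslarge} with $i = y-x-1$, note $\sum_{j=x+1}^{y}|V_j| \leq \sum_{j \geq x+1}|V_j|$, and solve for $|V_y|$.
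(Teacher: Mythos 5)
Your proposal is correct and is essentially identical to the paper's own argument: both apply Lemma~\ref{lem-epslarge} with $i = y-x-1$, observe that the truncated sum $\sum_{j=x+1}^{y}|V_j|$ is bounded by the full sum $\sum_{j \geq x+1}|V_j|$ since the terms are nonnegative, and rearrange. Nothing further is needed.
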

\begin{proof}
	Apply Lemma~\ref{lem-epslarge} with $i = y - x - 1$ to get that $$\sum_{j=x+1}^{n}|V_j| \leq 2(1 + c_\epsilon)^{y - x - 1} \cdot |V_y|,$$ hence $$\sum_{j=x+1}^{y}|V_j| \leq 2(1 + c_\epsilon)^{y - x - 1} \cdot |V_y|$$ and the corollary follows.
\end{proof}  

\subsection{ Proof of Theorem~\ref{thm-main}}

Let $d_1 < d_2 < ... < d_k=n$ be the $\epsilon$-large indices determined by $V$  (see Definition~\ref{def-epslarge}) and let us define $d_0 = 0$. We  define $$W_1 = V_1 \cup V_2 \cup ... \cup V_{d_1};$$ $$W_2 = V_{{d_1} + 1} \cup V_{{d_1} + 2} \cup ... \cup V_{d_2}$$ etc. for $1 \leq i \leq k$. Let  $$m_i = \dim(W_1 \cup W_2 \cup ... \cup W_i),$$ for $1 \leq i \leq k$ and set $m_0 = 0$.

Consider $W_i$ for some  $1 \leq i \leq k$. Since $d_i$ is an $\epsilon$-large index, we have that $$|V_{d_i}| \geq c_\epsilon(|V_{{d_i} + 1}| + |V_{{d_i} + 2}| + ... + |V_n|)$$ with $c_\epsilon = 1/(\delta - \epsilon)$. Hence $$(\delta - \epsilon)|V_{d_i}| \geq |V_{{d_i} + 1}| + |V_{{d_i} + 2}| + ... + |V_n|.$$ Furthermore, each of $d_{i - 1} + 1, d_{i - 1} + 2, ..., d_i - 2, d_i - 1$ are $\epsilon$-small indices, so by Corollary~\ref{cor-epslarge} we have  $$|V_{d_i}| \geq \frac{1}{2(1 + c_\epsilon)^{d_i - d_{i - 1} - 1} }(|V_{d_{i - 1} + 1}| + |V_{d_{i - 1} + 2}| + ... + |V_{{d_i} - 1}| + |V_{d_i}|).$$

Therefore our configuration satisfies the conditions of Lemma~\ref{lem-partition}, with $x = d_{i - 1}$, $y = d_i$, $c_1 = \frac{1}{2(1 + c_\epsilon)^{d_i - d_{i - 1} - 1}}$, and $c_2 = \epsilon$. For these values of $x$ and $y$, the set $P_1$ defined in the lemma equals $W_1 \cup W_2 \cup ... W_{i - 1}$, and the set $P_2$ equals $W_i$. Therefore we get that  $$\dim(W_i) \leq m_{i - 1} + (24/\eps)\cdot (1 + c_\epsilon)^{d_i - d_{i - 1} - 1}.$$ Now, since $m_i \leq m_{i - 1} + \dim(W_i)$, we have that $$m_i \leq 2m_{i - 1} + (24/\eps) \cdot (1 + c_\epsilon)^{d_i - d_{i - 1} - 1}.$$

\begin{claim}
For all $0 \leq i \leq k$ we have $$m_i \leq \frac{24}{\eps} \sum_{1 \leq j \leq i} 2^{i - j}(1 + c_\epsilon)^{d_j - d_{j - 1} - 1}.$$
\end{claim}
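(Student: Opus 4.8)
The plan is to prove the claim by induction on $i$, since the recurrence $m_i \leq 2m_{i-1} + (24/\eps)(1+c_\epsilon)^{d_i - d_{i-1}-1}$ has already been established just above the claim statement, and the claimed closed form is exactly what one obtains by unrolling this recurrence.

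For the base case $i = 0$, note that $m_0 = 0$ by definition, and the sum on the right-hand side is empty, hence also $0$, so the inequality holds (with equality). For the inductive step, suppose the bound holds for $i - 1$, i.e. $m_{i-1} \leq \frac{24}{\eps}\sum_{1 \leq j \leq i-1} 2^{i-1-j}(1+c_\epsilon)^{d_j - d_{j-1}-1}$. Plugging this into the recurrence gives
$$
m_i \leq 2\cdot \frac{24}{\eps}\sum_{1 \leq j \leq i-1} 2^{i-1-j}(1+c_\epsilon)^{d_j - d_{j-1}-1} + \frac{24}{\eps}(1+c_\epsilon)^{d_i - d_{i-1}-1}.
$$
The first term equals $\frac{24}{\eps}\sum_{1 \leq j \leq i-1} 2^{i-j}(1+c_\epsilon)^{d_j - d_{j-1}-1}$, and the second term is precisely the $j = i$ term of the target sum (since $2^{i-i} = 1$). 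Combining the two yields $m_i \leq \frac{24}{\eps}\sum_{1 \leq j \leq i} 2^{i-j}(1+c_\epsilon)^{d_j-d_{j-1}-1}$, completing the induction.

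There is essentially no obstacle here: the only mild subtlety is making sure the indexing in the "$2^{i-j}$" exponent lines up correctly when the factor of $2$ is pulled inside the sum (it turns $2^{i-1-j}$ into $2^{i-j}$), and checking that the newly added $j=i$ term carries the correct exponent $2^0 = 1$ on the power of $2$ and $(1+c_\epsilon)^{d_i - d_{i-1}-1}$ on the other factor. Both of these match the recurrence term exactly, so the bookkeeping is immediate. After establishing the claim, I expect the proof of Theorem~\ref{thm-main} to conclude by taking $i = k$, using $\adim(V) \leq \dim(V) = m_k$, bounding $d_j - d_{j-1} - 1 \leq n - 1$ (or summing the exponents telescopically via $\sum_j (d_j - d_{j-1} - 1) \leq n$ together with $k \leq n$), and absorbing the resulting geometric-type sum into the stated bound $\frac{C}{\eps^2}(1 + \frac{1}{\delta - \eps})^n$.
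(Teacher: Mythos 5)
Your proof is correct and follows essentially the same route as the paper: induction on $i$ with base case $m_0 = 0$, substituting the previously established recurrence $m_i \leq 2m_{i-1} + (24/\eps)(1+c_\epsilon)^{d_i - d_{i-1}-1}$ into the inductive hypothesis and absorbing the factor of $2$ into the sum. The bookkeeping matches the paper's argument exactly, so there is nothing to add.
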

\begin{proof}
We prove the claim by induction on $i$. The base case, $i = 0$, holds since $m_0 = 0$. Suppose the claim holds for $i = h$ and consider the case $i = h + 1$. By induction we have that $$m_h \leq \frac{24}{\eps} \sum_{1 \leq j \leq h} 2^{h - j}(1 + c_\epsilon)^{d_j - d_{j - 1} - 1}.$$ We also showed that $$m_{h + 1} \leq 2m_h + \frac{24}{\eps}(1 + c_\epsilon)^{d_{h + 1} - d_h - 1}. $$ Substituting the first inequality into the second we find that $$m_{h + 1} \leq \frac{24}{\eps} \sum_{1 \leq j \leq h} 2^{h + 1 - j}(1 + c_\epsilon)^{d_j - d_{j - 1} - 1} + \frac{24}{\eps}(1 + c_\epsilon)^{d_{h + 1} - d_h - 1} $$ which gives the desired result. 
\end{proof}

Using the claim for $i=k$ we get $$m_k \leq \frac{24}{\eps} \sum_{1 \leq j \leq k} 2^{k - j}(1 + c_\epsilon)^{d_j - d_{j - 1} - 1}.$$ Observe that for all $j$, $d_j - d_{j - 1} \leq n - k + 1$. This follows from the fact that the $d_j$ are strictly increasing, $d_0 = 0$, and $d_k = n$. Therefore, the summand $2^{k - j}(1 + c_\epsilon)^{d_j - d_{j - 1} - 1}$ is at most $2^{k - j}(1 + c_\epsilon)^{n - k}$, which in turn is at most $(1 + c_\epsilon)^n \cdot \left(\frac{2}{1 + c_\epsilon}\right)^k$. Adding these together we get that $$m_k \leq \frac{24\cdot k}{\eps}  (1 + c_\epsilon)^n \cdot \left(\frac{2}{1 + c_\epsilon}\right)^k.$$ Observe that, since $c_\eps = 1/(\delta - \eps) > 1/(1-\eps) > 1+ \eps$, we have  $2/(1+c_\eps) < 2/(2+\eps)$ and so we get that
$$m_k \leq \frac{24\cdot k}{\eps} \left(\frac{2}{2+\eps}\right)^k (1 + c_\epsilon)^n.$$

The expression $ \frac{24\cdot k}{\eps} \left(\frac{2}{2+\eps}\right)^k$ is maximized when $k = -\frac{1}{\ln(2/(2+\eps))} = O(1/\eps)$ and so we get
$$m_k \leq  \frac{C}{\eps^2}\cdot (1 + c_\eps)^n$$
For some absolute constant $C$. Since $m_k = \dim(W_1 \cup W_2 \cup ... \cup W_n) = \dim(V_1,\ldots,V_n)$, the proof of the theorem is complete.

%
\bibliographystyle{alpha}
\bibliography{MR}  
%
%

\end{document}